\def\sqr#1#2{{\vcenter{\vbox{\hrule height .#2pt
     \hbox{\vrule width .#2pt height#1pt \kern#1pt \vrule
     width .#2pt} \hrule height .#2pt}}}}
\newcommand{\R}{{\mathbb{R}}}
\newcommand{\E}{{\mathbb{E}}}
\newcommand{\C}{{\mathbb{C}}}
\newcommand{\N}{{\mathbb{N}}}
\newcommand{\Po}{{\mathcal{P}}}
\newcommand{\Ba}{{\mathcal{B}}}
\newcommand{\Hi}{{\mathcal{H}}}
\newcommand{\Fo}{{\mathcal{F}}}
\newcommand{\Rea}{\operatorname{Re}}
\newcommand{\Mi}{{\mathcal{M}}}
\newcommand{\di}{{\mathcal{D}}}
\newtheorem{teorema}{Theorem}
\newtheorem{Remark}{Remark}
\newtheorem{definition}{Definition}
\newtheorem{proposition}{Proposition}
\newtheorem{corollary}{Corollary}
\newtheorem{lemma}{Lemma}
\begin{document}

\title[Infinite dimensional oscillatory integrals]{Infinite dimensional oscillatory integrals with polynomial phase and applications to high order heat-type equations}

\author{S. Mazzucchi}
\address{  Dipartimento di Matematica,  Universit\`a di Trento, 38123 Povo, Italia}

\maketitle

\begin{abstract}
The definition of infinite dimensional Fresnel integrals is generalized to the case of polynomial phase functions of any degree and applied to the  construction of a functional integral representation of the solution to a general class of high order heat-type equations.\\

\noindent {\it Key words:} Infinite dimensional integration, partial differential equations, representations of solutions.
\bigskip

\noindent {\it AMS classification }: 35C15,  	35G05, 28C20, 47D06. 

\end{abstract}

\vskip 1\baselineskip

\section{Introduction}
Functional integration is a powerful tool for the study of dynamical systems \cite{Sim}. The main example is the celebrated Feynman-Kac formula  \eqref{Fey-Kac}, which provides  a probabilistic representation of the solution to the heat equation
\begin{equation} \label{heat}
\left\{\begin{aligned}
\frac{\partial}{\partial t}u(t,x)&=\frac{1}{2}\Delta u(t,x) -V(x)u(t,x),\qquad t\in \R^+, x\in\R^d, \\
u (0,x)&=u _0(x), 
\end{aligned}\right. \end{equation}
in terms of the expectation with respect to the distribution of the Wiener process $W$ starting at $x$ (see, e.g. \cite{KarSh}),
\begin{equation}\label{Fey-Kac}
u (t,x)=\E^x[ e^{-\int_0^tV(W(s))ds}u_0(W(t))].\end{equation}
Formula \eqref{Fey-Kac} can be established under rather mild requirements on the potential $V$ and the initial datum $u_0$ (see, e.g. , \cite{Sim}) and provides an important instrument in the study of  heat equation and  its solutions. 

More generally, an extensively developed theory relates stochastic processes with the solution to parabolic equations associated to second-order elliptic operators \cite{Dyn}.
However, that theory cannot be applied to  more general  PDEs such as, for instance,  the Schr\"odinger equation
\begin{equation} \label{schroedinger}\left\{ \begin{aligned}
i\frac{\partial}{\partial t}u(t,x)& =-\frac{1}{2}\Delta u(t,x) +V(x)u(t,x),\qquad t\in \R^+, x\in\R^d\\
u (0,x)&=u _0(x)
\end{aligned}\right. \end{equation}
describing the time evolution of the state of a nonrelativistic quantum particle, 
or also heat-type equations associated to high-order differential operators, such as for instance
\begin{equation}\label{eqLapl2}
\frac{\partial}{\partial t} u(t)=- \Delta^2u(t)-V(x)u(t,x).
\end{equation}
Indeed,   a  Markov process $\{X(s)\, :\, 0\leq s\leq t\}$ playing the same role for Eq. \eqref{schroedinger} or Eq. \eqref{eqLapl2}  as the Brownian motion for the heat equation doesn't exist. Hence  there is no ``generalized Feynman Kac formula" 
\begin{eqnarray}\label{Fey-KacN}
u (t,x)&=&\E^x[ e^{-\int_0^tV(X(s))ds}u_0( X(t))]\\
 &=&\int _{\R^[0,t]}e^{-\int_0^tV(\omega(s))ds}u_0( \omega (t))dP(\omega),\nonumber
\end{eqnarray} representing  the solution of  Eq. \eqref{schroedinger} or Eq. \eqref{eqLapl2} in terms of a (Lebesgue type) integral with respect to a probability measure $P$ on $\R^{[0,t]}$  associated to the  process  $X(s)$.\\
Contrarily to the heat equation case, for both   Eq. \eqref{schroedinger} and Eq. \eqref{eqLapl2} the fundamental solution $G_t(x,y)$ is not real and positive, even in the simplest case $V\equiv 0$.  In particular    the Green function $G_t(x,y)$ of the Schr\"odinger equation is complex, while for the  high-order heat-type equation \eqref{eqLapl2}  $G_t(x,y)$ is real and attains both positive and negative values \cite{Hochberg1978}. Therefore it  cannot be interpreted as the density of a transition probability measure.
As a  troublesome consequence,
the complex (resp. signed) finitely-additive   measure $\mu$ on  $\Omega=\R^{[0,t]}$ defined on the algebra of  ``cylinder sets"
$I_k\subset\Omega$ (where $\Omega\equiv \R^{[0,+\infty)}$) of the form  $$I_k:=\{\omega\in \Omega: \omega (t_j)\in [a_j,b_j], j=1,\dots k\},\quad  0<t_1<t_2<\dots t_k,$$ by
\begin{equation}\label{CylMeas}
\mu(I_k)=\int_{a_1}^{b_1}...\int_{a_k}^{b_k}\prod_{j=0}^{k-1}G_{t_{j+1}-t_j}(x_{j+1},x_j)dx_1...dx_{k},
\end{equation} doesn't extend to a corresponding $\sigma$-additive measure on the generated $\sigma$-algebra. As a matter of fact, if this measure existed, it would have infinite total variation.\\
 This problem was addressed  in 1960 by Cameron \cite{Cam} for the Schr\"odinger equation and by Krylov \cite{Kry} for Eq. \eqref{eqLapl2}. These results may be viewed as particular cases of a general theorem later established by E. Thomas \cite{Thomas}, extending Kolmogorov existence theorem to limits of  projective systems of signed or complex measures, instead of probability ones.\\ 
In fact, these no-go results forbid a functional integral representation of the solution of Eq. \eqref{schroedinger} or Eq. \eqref{eqLapl2} in terms of a Lebesgue-type integral with respect to a $\sigma$-additive  complex or signed measure with finite total variation.   Consequently, the integral appearing in the generalized Feynman-Kac formula  \eqref{Fey-KacN} has to be thought in a  weaker sense.  One possibility is the definition of the ``integral" in terms of a linear continuous functional on a suitable Banach algebra of ``integrable functions", in the spirit of Riez-Markov theorem, that provides a one-to-one correspondence between complex bounded measures (on suitable topological spaces $X$) and linear continuous functionals on $C_\infty(X)$ (the continuous functions on $X$ vanishing at $\infty$).\\
Referring to  Schr\"odinger equation, this issue has been extensively studied, producing a number of different mathematical definitions of Feynman path integrals (see \cite{Ma} for an account). We mention in particular  for future reference the {\em Parseval approach}, introduced by It\^o \cite{Ito1,Ito2} in the 60s and developed in the 70s by S. Albeverio and R. Hoegh-Krohn \cite{AlHK, AlHKMa}, and by D. Elworthy and A. Truman \cite{ELT}.

Dealing with the parabolic equation \eqref{eqLapl2} associated to the bilaplacian,  various  formulations have been proposed. One of the first was introduced by  Krylov \cite{Kry} and extended by Hochberg \cite{Hochberg1978}. Defining a suitable stochastic pseudo-process whose transition probability function is not positive definite,  the authors realized formula \eqref{Fey-KacN} in terms of the expectation with respect to a  signed measure on $\R^{[0,t]}$ with infinite total variation. That is the reason way  the integral in  \eqref{Fey-KacN} is not defined in Lebesgue sense, but  is meant as the limit of finite dimensional cylindrical approximations \cite{BeHocOr}.
It is worthwhile  mentioning  the work by D. Levin and T. Lyons relying on the ``rough paths" theory. Indeed, in  \cite{LevLyo} the authors  conjecture  that the signed measure (with infinite total variation)  associated to the Krylov-Hochberg  pseudo-process could became finite if defined on a certain quotient space on the path space (two path paths are equivalent if they differ for reparametrization). \\
A different approach was proposed by  Funaki \cite{Funaki1979} and continued by Burdzy \cite{Burdzy1993}. It is based 
on the construction of a complex-valued stochastic process with dependent increments, obtained by composing two independent Brownian motions. In  \cite{Funaki1979}, formula \eqref{Fey-KacN} with $V=0$  is realized as an integral  with respect to a well defined positive probability measure on a complex space  for a suitable class of analytic initial data $u_0$ at least. These results have been further developed in \cite{Funaki1979,HocOr96,OrZha} and are related to Bochner's subordination theory \cite{Boch}.
Complex-valued processes, related to PDEs of the form \eqref{eqLapl2}, were also proposed by other authors exploiting various techniques \cite{Burdzy1995,ManRy93,BurMan96,Sainty1992}. A new construction for the solution of a general class of high order heat-type equations has been recently proposed, where formula \eqref{Fey-KacN} has been realized as limit of expectations with respect to a sequence of suitable random walks in  the complex plane \cite{BoMa14}.\\
 We also mention a completely different approach proposed by R. L\'eandre \cite{Lea}, which shares some analogies with the mathematical construction of Feynman path integrals with the  white-noise-calculus approach \cite{HKPS}.\\
It is worthwhile remarking that  most  of the results appearing in the literature are restricted to the cases where either $V=0$ or $V$ is linear. The construction of a generalized Feynman-Kac type formula is still lacking for the solution of high-order heat-type equations similar to \eqref{eqLapl2} with a more general  $V$.  \par
This work aims to construct a  Feynman-Kac formula for the  solution of  a general class of high-order heat-type equations of the form
\begin{equation}\label{PDE-N}
\frac{\partial}{\partial t}u(t,x)=(-i)^p \alpha  \frac{\partial^p}{\partial x^p}u(t,x)+V(x)u(t,x), \quad t\in [0, +\infty), \; x\in \R,
\end{equation}
where $p\in\N$,  $p>2$, $\alpha\in\C$ is a complex constant and $V:\R\to\C$ a continuous bounded function Fourier transform of a complex Borel measure on $\R$. \\
Adopting the {\em Fresnel integral } formulation of the mathematical definition of Feynman path integrals \cite{AlHKMa, AlHK}, we introduce  infinite dimensional Fresnel integrals with polynomial phase, generalizing the existing results valid for quadratic phase functions. 
If the phase function is an homogeneous polynomial of order $p$, we show in particular how this new kind of functional integral
is related to the fundamental solution of Eq. \eqref{PDE-N} with $V\equiv 0$.  This relation will be eventually exploited in the proof of a functional integral representation of the solution of Eq.  \eqref{PDE-N}, for a  suitable class of potentials $V$ and initial data $u_0$, giving rise to a new type of generalized Feynman-Kac formula.

In section 2, a detailed study of the fundamental solution of Eq. \eqref{PDE-N} takes place in the case  $V=0$. In section 3, we introduce the definition of infinite dimensional Fresnel integral with polynomial phase function  showing that a particular example is related to the PDE \eqref{PDE-N} with $V\equiv 0$. In section 4, we build up a representation of the solution of \eqref{PDE-N} with $V\neq 0$ in terms of an infinite dimensional Fresnel integral.
 
\section{The fundamental solution of high-order heat-type equations}
Let us consider the $p$-order heat-type equation:
\begin{equation} \label{PDE-p}\left\{ \begin{aligned}
\frac{\partial}{\partial t}u(t,x)&=(-i)^p \alpha \frac{\partial^p}{\partial x^p}u(t,x)\\
u(0,x) &= u_0(x),\qquad x\in\R, t\in [0,+\infty)
\end{aligned}\right. \end{equation}
where $p\in\N$, $p\geq 2$, and $\alpha\in\C$ is a complex constant. In the following we shall assume that  $|e^{\alpha tx^p}|\leq 1$ for all $x\in\R$ and $ t\in [0,+\infty)$. In particular, if $p$ is even  this condition is fulfilled if  $\Rea(\alpha)\leq 0$, while if $p$ is odd then    $\alpha$  will be taken purely imaginary.\\
In the case where $p=2$ and $\alpha \in \R$, $\alpha<0$, we obtain the heat equation, while for $p=2$ and $\alpha =i$ Eq. \eqref{PDE-p}  is the Schr\"oedinger equation. Since both cases  are extensively studied,   in the following we shall mainly focus ourselves on the case where $p\geq 3$.
 
Let $G^p_t(x,y)$ be the fundamental solution of Eq.\eqref{PDE-p}. Given an initial datum $u_0$ belonging to the space $S(\R)$ of  Schwartz   test functions,  the solution of the Cauchy problem \eqref{PDE-p} is given by:
\begin{equation}\label{Green}
u(t,x)=\int_\R G^p_t(x,y)u_0(y)dy.
\end{equation}
In particular the following equality holds: $$G^p_t(x-y)=g^p_t(x-y),$$ where $g^p_t\in S'(\R)$ is  the Schwartz distribution defined by the Fourier transform
\begin{equation}\label{Green2}
g^p_t(x):=\frac{1}{2\pi}\int e^{ikx}e^{\alpha tk^p}dk,\qquad x\in\R.
\end{equation}

The following lemmas state some regularity properties of the distribution $g^p_t$ that will be used in the next section. 
\begin{lemma}\label{lemma1}
The  tempered distribution \eqref{Green2} is a $C^\infty$ function.
\end{lemma}
\begin{proof}
A priori $g^p_t$ is an element of  $S'(\R)$, the Schwartz space of distribution, but we shall prove that $g^p_t$ is a $C^\infty$ function defined by an absolutely convergent Lebesgue integral.  This can be easily proved in the case where $p$ is even and $\Rea(\alpha) <0$, since the function $k\mapsto e^{\alpha tk^p}$ is an element of  $L^1(\R)$.\\
In the case where $\Rea(\alpha)= 0$, i.e. $\alpha=ic$ with $c\in\R$, the function $k\mapsto e^{\alpha tk^p}$ is not summable.
 Let us  denote by $\psi\in S'(\R)$ the tempered distribution defined by this map and by $\chi_{[-R,R]}$  the characteristic function of the interval $[-R,R]\subset \R$.  By the convergence of $\chi_{[-R,R]}\psi$ to $ \psi $ in  $S'(\R)$ as $R\to +\infty$ and the continuity of the Fourier transform as a map from $S'(\R)$ to $S'(\R)$ we have that $$g^p_t=\hat \psi=\lim _{R\to+\infty }\widehat{\chi_{[-R,R]}\psi}.$$
On the other hand, by a change in the integration path in the complex $k$-plane, in the case where $p$ is even and $c>0$ we have:
\begin{eqnarray}
g^p_t(x)&=&\lim_{R\to\infty}\frac{1}{2\pi}\int_{-R}^R e^{ikx}e^{ict k^p}dk=\lim_{R\to\infty}\frac{1}{2\pi}\int_{0}^R (e^{ikx}+e^{-ikx})e^{ict k^p}dk\nonumber\\
&=&\lim_{R\to\infty}\frac{e^{i\pi/2p}}{2\pi}\int_{0}^R (e^{ie^{i\pi/2p}kx}+e^{-ie^{i\pi/2p}kx})e^{-c tk^p}dk\nonumber\\
&=&\frac{e^{i\pi/2p}}{2\pi} \int_\R e^{ie^{i\pi/2p}kx}e^{-ct k^p}dk,\label{G1}
\end{eqnarray}
while in the case where $p$ is even and $c<0$:
\begin{equation}
g^p_t(x)=\frac{e^{-i\pi/2p}}{2\pi} \int_\R e^{ie^{-i\pi/2p}kx}e^{-c tk^p}dk.\label{G2}
\end{equation}
In the case  $p$ is odd, a different integration contour in the complex $k-$plane yields the following representation:
\begin{eqnarray}
g^p_t(x)&=&\lim_{R\to\infty}\frac{1}{2\pi}\int_{-R}^R e^{ikx}e^{ict k^p}dk\nonumber\\
&=&\frac{1}{2\pi}\int_{\R+i\eta} e^{ixz}e^{ic tz^p}dz\label{G3}
\end{eqnarray}
where $\eta>0$ if $c>0$ while $\eta<0$ if $c<0$. The integrand in the second line of \eqref{G3} is absolutely convergent since $ |e^{ict (\Rea(z)+i\eta)^p}|\sim e^{-ct\eta (\Rea(z))^{p-1}}$ as $|\Rea(z)|\to\infty$. \\
Eventually representations \eqref{G1},  \eqref{G2} and \eqref{G3} show that $g^p_t$ is a $C^\infty $ function of the variable $x$.
\end{proof}
\begin{Remark}
The proof of lemma \eqref{lemma1} shows that $g^p_t:\R\to \C$ can be extended to an entire analytic function  of $z\in\C$.
The analyticity of $g^p_t$ follows by the application of Fubini's and Morera's theorems.
\end{Remark}
\begin{Remark}
A formula similar to \eqref{G1} has also been proved in \cite{AlMa2005} and applied to the study of some asymptotic properties of finite dimensional Fresnel integral with polynomial phase function.
\end{Remark}
The following lemma relies on  the study of the detailed asymptotic behaviour of $g^p_t(x)$ for $x\to \infty$.
\begin{lemma}\label{lemma1-asy}
The function $g^p_t$ is bounded. In particular if $p$ is even and $\Rea(\alpha)<0$ then $g^p_t\in L^1(\R)$.
\end{lemma}
\begin{proof}
By lemma \ref{lemma1} the function $g^p_t$ is continuous, hence the proof of  its boundedness can be based only on the study of its asymptotic  behavior for $x\to \infty$. This task is accomplished by means of the stationary phase method \cite{Mur,Hor}.\\ 
For $x\to +\infty$,  a change of variables in \eqref{Green2} gives:
\begin{equation}\label{int1}
g^p_t(x)=\frac{x^{\frac{1}{p-1}}}{2\pi}\int_\R e^{x^{p/p-1}(i\xi+\alpha t\xi^p)}d\xi=\frac{x^{\frac{1}{p-1}}}{2\pi}\int_\R e^{x^{p/p-1}\phi(\xi)}d\xi,
\end{equation}
 $\phi:\R\to \C$ being the complex phase function 
$$\phi(\xi)=i\xi+\alpha t\xi^p, \qquad \xi\in\R.$$
If either $\Rea (\alpha) \neq 0$ or 
$p$ is odd and $\alpha =ic$, with $c\in \R^+$, then the phase function $\phi$ has no stationary points on the real line, i.e. there are no real solutions of the equation $\phi'(\xi)=0$.  In this cases an integration by parts argument yelds: 
$$\int e^{x^{\frac{p}{p-1}}\phi(\xi)}d\xi=\int \frac{1}{x^{\frac{p}{p-1}}\phi'(\xi)} \frac{d}{d\xi} e^{x^{\frac{p}{p-1}}\phi(\xi)}d\xi=\frac{1}{x^{\frac{p}{p-1}}}\int e^{x^{\frac{p}{p-1}}\phi(\xi)}\frac{\phi''(\xi)}{(\phi'(\xi))^2}d\xi.$$
By iterating  this procedure we obtain that for all $N\in \N$: 
$$g^p_t(x)\stackrel{x\to +\infty}{<<}(x^{\frac{p}{p-1}})^{-N}.$$
 

In the case where  $\alpha =ic$ with $c\in \R$, Eq. \eqref{int1} can be written as
\begin{equation}\nonumber
g^p_t(x)=\frac{x^{\frac{1}{p-1}}}{2\pi}\int_\R e^{ix^{p/p-1}(\xi+c t\xi^p)}d\xi, \qquad x>0.
\end{equation}
If $p$ is even, an application of the stationary phase method \cite{Mur,Hor} gives: 
\begin{eqnarray}
g^p_t(x)&=&\frac{x^{\frac{1}{p-1}}}{2\pi}\int_\R e^{ix^{p/p-1}(\xi+ct\xi^p)}d\xi \nonumber\\
&\stackrel{x\to +\infty}{\sim}& e^{sign (c)i\frac{\pi}{4}} \frac{x^{\frac{2-p}{2(p-1)}}}{\sqrt{2\pi}} e^{-ix^{p/p-1}\frac{p-1}{p}\left(\frac{1}{pct}\right)^{1/p-1}} \sqrt{   \frac{ (pct)^{ \frac{p-2}{p-1}}  }{|c|tp(p-1)  }    } .\nonumber
\end{eqnarray}
In the case where $p$ is odd  and $c<0$, the same technique yields:
\begin{eqnarray}
g^p_t(x)&=&\frac{x^{\frac{1}{p-1}}}{2\pi}\int_\R e^{ix^{p/p-1}(\xi+ct\xi^p)}d\xi \nonumber\\
&\stackrel{x\to +\infty}{\sim} & e^{-i\frac{\pi}{4}}(p-1)^{-1/2}(p|c|t)^{-\frac{1}{2(p-1)}} \frac{x^{\frac{2-p}{2(p-1)}}}{\sqrt{2\pi}} e^{ix^{p/p-1}\frac{p-1}{p}\left(-\frac{1}{pct}\right)^{\frac{1}{p-1}}}.\nonumber
\end{eqnarray}

The case where $x\to -\infty$ can be  studied in the same way. In particular,   if $p$ is an even integer the behaviour of $g^p_t$ for $x\to -\infty$  coincides with the one for $x\to +\infty$.\\
 For  $p$  odd  and  
 $x<0$, a change of variable argument gives:
$$g^p_t(x)=\frac{(-x)^{\frac{1}{p-1}}}{2\pi}\int_\R e^{i(-x)^{p/p-1}(-\xi+ct\xi^p)}d\xi. $$
If $c<0$ then the phase function $\phi(\xi)=-\xi+ct\xi^p$ has no real  stationary points, hence
 $$g^p_t(x)\stackrel{x\to -\infty}{<<}x^{-N}, \qquad  \forall N\in\N.$$ In the case where $c>0$ and $x\to -\infty$ the stationary phase method yields
$$g^p_t(x)\stackrel{x\to -\infty}{\sim} e^{i\frac{\pi}{4}}(p-1)^{-1/2}(pct)^{-\frac{1}{2(p-1)}} \frac{(-x)^{\frac{2-p}{2(p-1)}}}{\sqrt{2\pi}}e^{i(-x)^{p/p-1}\frac{1-p}{p}(pct)^{-\frac{1}{p-1}}}.$$

Eventually  these results give the boundedness of the function $g^p_t$. Furthermore, if $p$ is even and $\Rea(\alpha)<0$ then  $g^p_t$ is even  summable.

\end{proof}





\section{Infinite dimensional Fresnel integrals with polynomial phase}\label{sez2}
Classical oscillatory integrals on $\R^n$ are objects of this form
\begin{equation}\label{int-osc}\int _{\R^n}f(x)e^{i\Phi(x)}dx,
\end{equation}
where $\Phi$ and $ f$ are complex Borel functions. The interesting case where the {\em phase function} $\Phi$ is real valued has been extensively studied in connection with the theory of Fourier integral operator \cite{Hor}. If the function $f$ is not summable the integral \eqref{int-osc} is not defined in Lebesgue sense. In \cite{Hor},  H\"ormander proposes and exploits an alternative definition which can handle the case where $f\notin L^1(\R^n)$. We present here a formulation of H\"ormander's definition of oscillatory integral, which was  applied  to the mathematical construction of Feynman path integrals in \cite{ELT,AlBr}.   
\begin{definition}\label{def-int-osc}Let $f:\R^n\to \C$ and $\Phi:\R^n\to \R$ be Borel functions. Assuming that: 
\begin{enumerate}
\item for any Schwartz test function $\phi\in S(\R^n)$ such that $\phi(0)=1$ the function $g_\epsilon (x):=\phi(\epsilon x)f(x)e^{i\Phi(x)}$ is summable,
\item  the limit $\lim_{\epsilon \to 0}\int g_\epsilon (x)dx $ exists and is independent of $\phi$.
\end{enumerate}
Then the {\em oscillatory integral } $ \int^o _{\R^n}f(x)e^{i\Phi(x)}dx$ is defined as:
$$\int^o _{\R^n}f(x)e^{i\Phi(x)}dx:=\lim_{\epsilon \to 0}\int_{\R^n} \phi(\epsilon x)f(x)e^{i\Phi(x)}dx$$
\end{definition}
In the case where $f\in L^1(\R^n)$ the oscillatory integral  reduces to a Lebesgue integral, i.e. $\int^o _{\R^n}f(x)e^{i\Phi(x)}dx =\int _{\R^n}f(x)e^{i\Phi(x)}dx$. \\
Definition \ref{def-int-osc} gives sense to classical Fresnel integrals such as
$\int_{\R^n}f(x)e^{\frac{i}{2}\|x\|^2} dx$ which are extensively applied in the theory of wave diffraction. In particular,  for $f=1$ definition \eqref{int-osc} yields the equality $\int_{\R^n}e^{\frac{i}{2}\|x\|^2} dx=(2\pi i)^{n/2}$.\\
In \cite{AlHKMa} oscillatory integration is generalized to the case where $\R^n$ is replaced by a real separable Hilbert space $(\Hi, \langle \;,\;\rangle)$ and the definition of {\em infinite dimensional Fresnel integral} is introduced. The construction relies upon a generalization of the Parseval equality 
\begin{equation}\label{Parseval1}
\int_{\R^n}\frac{e^{\frac{i}{2}\|x\|^2}}{(2\pi i)^{n/2}}f(x)dx=\int_{\R^n}e^{-\frac{i}{2}\|x\|^2}\hat f(x)dx,
\end{equation}
(valid for Schwartz test functions functions $f\in S(\R^n)$,  where $\hat f(x)=\int_{\R^n}e^{ixy}f(y)dy$). In fact (see \cite{ELT}) equality \eqref{Parseval1} can be generalized to the case the function $f:\R^n\to \C$ is the Fourier transform of a complex bounded Borel measure $\mu_f$ on $\R^n$, giving the following Parseval equality for the oscillatory integral 
\begin{equation}\label{Parseval2}
\int^o_{\R^n}\frac{e^{\frac{i}{2}\|x\|^2}}{(2\pi i)^{n/2}}f(x)dx=\int_{\R^n}e^{-\frac{i}{2}\|x\|^2}d\mu_f(x), 
\end{equation}
with $f(x)=\int_{\R^n}e^{ixy}d\mu_f(y)$. 
Formula \eqref{Parseval2} is crucial for the extension of oscillatory integration theory to an infinite dimensional setting.\\
Let us introduce the Banach space  $\Mi(\Hi)$ of complex Borel measures on $\Hi$ with finite total variation, endowed with the total variation norm $\|\mu\|_{\Mi(\Hi)}$. $\Mi(\Hi)$ is a commutative Banach algebra under convolution, the  unit being the Dirac point measure at $0$.\\ Let $\Fo(\Hi)$ be the space of complex functions $f:\Hi\to \C$  the form:
\begin{equation}\label{Fo}f(x)=\int_\Hi e^{i\langle x,y\rangle}d\mu(y)\equiv \hat \mu (x),\qquad x\in \Hi\end{equation}
for some $\mu \in \Mi(\Hi)$. The map $\Fo:\Mi(\Hi)\to \Fo(\Hi)$  sending a complex measure $\mu \in\Mi(\Hi)$ to its Fourier transform $\hat \mu$ defined by  Eq. \eqref{Fo} is linear and one to one. 
 By endowing the space $\Fo(\Hi)$  with the norm $\|f\|_\Fo:=\|\Fo^{-1}(f)\|_{\Mi(\Hi)}$, $\Fo(\Hi)$ becomes a commutative Banach algebra of continuous functions and the map $\Fo:\Mi(\Hi)\to \Fo(\Hi)$ is an isometry.\\
In \cite{AlHKMa,AlHK,ELT} the Parseval equality \eqref{Parseval2} is generalized to the case where $f\in \Fo(\Hi)$. The {\em infinite dimensional Fresnel integral} of a function $f\in \Fo(\Hi)$ is denoted by $\widetilde{ \int} e^{\frac{i}{2}\|x\|^2}f(x)dx$ and defined as
\begin{equation}\label{Parseval2}
\widetilde{\int} e^{\frac{i}{2}\|x\|^2}f(x)dx:=\int_{\Hi}e^{-\frac{i}{2}\|x\|^2}d\mu(x), 
\end{equation}
where $f(x)=\int_\Hi e^{i\langle x,y\rangle}d\mu(y)$ and
the right hand side of \eqref{Parseval2} is a well defined (absolutely convergent) Lebesgue integral.\\
Infinite dimensional Fresnel integrals have been successfully applied to the representation of the solution of Schr\"odinger equation \eqref{schroedinger} (see i.e. \cite{AlHKMa,Ma} and references therein). Let us denote with $\Hi_t$  the real Hilbert space  of absolutely continuous paths $\gamma:[0,t]\to\R^d$, such that $\int_0^t\dot\gamma(s)^2ds<\infty$ and $\gamma(t)=0$. The inner  product  in $\Hi_t$ is defined as $\langle\gamma, \eta\rangle=\int_0^t\dot \gamma(s)\dot \eta(s)ds$. By assuming that the initial datum $u_0$ and the potential $V$ in Eq.  \eqref{schroedinger} belong to $\Fo(\R^d)$,  it is possible to prove that the function on $\Hi_t$:
$$\gamma\mapsto u_0(\gamma(0)+x)e^{-i\int_0^tV(\gamma(s)+x)ds},\qquad \gamma\in H_t,\; x\in \R^d,$$
belongs to $\Fo(\Hi_t)$.  Further  the infinite dimensional Fresnel integral
$$\widetilde{\int} e^{\frac{i}{2}\|\gamma\|^2}e^{-i\int_0^tV(\gamma(s)+x)ds}u_0(\gamma(0)+x)d\gamma$$ 
provides a functional integral representation of the solution to the Schr\"odinger equation \eqref{schroedinger}.\\
A partial generalization of the definition of infinite dimensional  Fresnel integrals and of formula \eqref{Parseval2}  was developed in \cite{AlMa2}, where  the quadratic phase function $\Phi(x)=\frac{i}{2}\|x\|^2$ was replaced with a fourth order polynomial. This new functional integral  allows the mathematical definition  of the Feynman path integrals for  the Schr\"odinger equation with a quartic-oscillator potential \cite{AlMa2,AlMa3,Ma2008}.\par
 In the following we are going to generalize the definition in \eqref{Parseval2} to polynomial phase functions  of any order and apply these {\em generalized Fresnel integrals} to the  construction of a Feynman-Kac formula for the solution of high-order heat-type equations \eqref{PDE-N}.

Let us consider   a real separable Banach space $(\Ba, \|\,\|)$.  Let $\Mi(\Ba)$ be the  space of complex bounded variation measures on $\Ba$, endowed with the total variation norm. As remarked above, $\Mi(\Ba)$ is a Banach algebra under convolution. Let $\Ba^*$ be the topological dual of $\Ba$ and  $\Fo(\Ba)$ the Banach algebra of complex-valued functions $f:\Ba^*\to \C$ of the form
\begin{equation}\label{Fo-Ba}f(x)=\int_{\Ba}e^{i\langle x,y\rangle}d\mu(y)\equiv \hat\mu(x), \qquad x\in \Ba^*,\, \mu\in \Mi(\Ba), \end{equation}
where $\langle \, ,\,\rangle$ denotes the dual pairing between $\Ba$ and $\Ba^*$. The space $\Fo(\Ba)$ endowed with the  norm $\|\hat \mu\|_{\Fo}:=\|\mu\|_{\Mi(\Ba)}$  and the pointwise multiplication is a Banach algebra of functions.\\
In the following we are going to define a class of linear continuous functionals on $\Fo(\Ba)$, by generalizing the construction of infinite dimensional Fresnel integrals defined by Eq. \eqref{Parseval2}.

\begin{definition}
Let $\Phi:\Ba\to \C$ be a continuous map  such that $\Rea (\Phi_p(x))\leq 0$ for all $x\in\Ba$.
 The infinite dimensional Fresnel integral on $\Ba^*$ with phase function $\Phi$ is the functional $I_{\Phi}:\Fo(\Ba)\to \C$, given by
\begin{equation}\label{Parseval-p}I_{\Phi}(f):=\int_\Ba e^{\Phi(x)}d\mu(x), \qquad f\in\Fo(\Ba), f=\hat\mu.\end{equation}
\end{definition}
By construction, the functional $I_{\Phi}$ is linear and continuous, indeed: 
$$|I_{\Phi_p}(f)|\leq \int_\Ba |e^{\Phi_p}|d|\mu|(x)\leq \|\mu\|=\|f\|_{\Fo }$$
Further $I_{\Phi}$ is normalized, i.e.,  $I_{\Phi_p}(1)=1$.
We summarize these properties  in the following proposition.
\begin{proposition}
The space $\Fo(\Ba)$ of Fresnel integrable functions is a Banach function algebra in the norm $\|\,\|_\Fo$. The infinite dimensional Fresnel integral with phase function $\Phi$ is a continuous bounded linear functional $I_{\Phi}:\Fo(\Ba)\to \C$ such that $|I_{\Phi_p}(f)|\leq \|f\|_{\Fo }$ and  $I_{\Phi_p}(1)=1$.
\end{proposition}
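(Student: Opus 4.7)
The plan is to reduce every assertion to known Banach algebra properties of $\Mi(\Ba)$ via the Fourier transform map $\Lambda:\Mi(\Ba)\to\Fo(\Ba)$ given by $\Lambda(\mu)(x)=\int_\Ba e^{i\langle x,y\rangle}d\mu(y)$. First I would record that $\Lambda$ is linear and \emph{injective}: if $\Lambda(\mu)\equiv 0$ on $\Ba^*$, then the characters $\{e^{i\langle \cdot,y\rangle}\}_{x\in\Ba^*}$, which separate points of $\Ba$, annihilate $\mu$, forcing $\mu=0$ (this is the usual uniqueness theorem for Fourier transforms of bounded measures on a separable Banach space). Injectivity makes the definition $\|f\|_\Fo:=\|\mu\|_{\Mi(\Ba)}$ unambiguous and promotes $\Lambda$ to an isometric bijection onto $\Fo(\Ba)$.

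Next I would transport the Banach algebra structure. Since $\Mi(\Ba)$ is complete under the total variation norm, so is $\Fo(\Ba)$ under $\|\cdot\|_\Fo$. Fubini's theorem yields $\Lambda(\mu*\nu)=\Lambda(\mu)\,\Lambda(\nu)$, so pointwise multiplication on $\Fo(\Ba)$ corresponds to convolution on $\Mi(\Ba)$, and submultiplicativity $\|fg\|_\Fo\le \|f\|_\Fo\|g\|_\Fo$ follows from $\|\mu*\nu\|_{\Mi(\Ba)}\le\|\mu\|_{\Mi(\Ba)}\|\nu\|_{\Mi(\Ba)}$. The unit is $1=\Lambda(\delta_0)$, with $\|1\|_\Fo=1$.

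For the functional $I_{\Phi_p}$, the continuity of $\Phi_p$ makes $x\mapsto e^{\Phi_p(x)}$ Borel measurable, while assumption (2) gives $|e^{\Phi_p(x)}|\le 1$; so the integral \eqref{Parseval-p} converges absolutely against any finite complex measure, and by injectivity of $\Lambda$ its value depends only on $f$, not on the chosen representation. Linearity is inherited from linearity of $\mu\mapsto\int e^{\Phi_p}d\mu$ composed with $\Lambda^{-1}$. The displayed estimate
\[
|I_{\Phi_p}(f)|\le \int_\Ba |e^{\Phi_p(x)}|\,d|\mu|(x)\le \|\mu\|_{\Mi(\Ba)}=\|f\|_\Fo
\]
gives continuity. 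Finally, the homogeneity property (1) at $\lambda=0$ forces $\Phi_p(0)=0$ (using $p\ge 1$), so $I_{\Phi_p}(1)=\int e^{\Phi_p}d\delta_0=e^{\Phi_p(0)}=1$.

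There is essentially no hard step here: the statement is a bookkeeping summary of facts already observed in the paragraphs preceding it. The only point requiring a little care is the injectivity of $\Lambda$, which is what gives meaning to both the norm $\|\cdot\|_\Fo$ and the functional $I_{\Phi_p}$; once that is in hand everything else is a direct transcription of standard properties of $(\Mi(\Ba),*)$ through the isometric identification.
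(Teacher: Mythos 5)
Your argument is correct and follows essentially the same route as the paper, whose ``proof'' is just the paragraph preceding the proposition: the bound $|e^{\Phi_p(x)}|\leq 1$ from $\mathrm{Re}\,\Phi_p\leq 0$, the estimate $|I_{\Phi_p}(f)|\leq \|\mu\|_{\Mi(\Ba)}=\|f\|_{\Fo}$, and the normalization via $1=\Lambda(\delta_0)$, with the Banach algebra structure of $\Fo(\Ba)$ transported from $(\Mi(\Ba),*)$ as stated earlier in the section. The only difference is that you spell out the injectivity of the Fourier transform map (which the paper simply asserts), a worthwhile precaution since it is what makes $\|\cdot\|_\Fo$ and $I_{\Phi_p}$ well defined.
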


We can now present an interesting example of infinite dimensional Fresnel integral with polynomial phase function.\par
Fixed a $p\in \N$, with $p\geq 2$, let us consider the Banach space
$\Ba_p$  of absolutely continuous maps $\gamma:[0,t]\to\R$, with $\gamma(t)=0$ and  a weak derivative $\dot \gamma $ belonging to $L^p([0,t])$, endowed with the norm:
$$\|\gamma\|_{\Ba_p}=\left(\int_0^t|\dot \gamma(s)|^pds\right)^{1/p}. $$
The application  $T:\Ba_p\to L^p([0,t])$ mapping an element $\gamma\in \Ba_p$ to its weak derivative $\dot \gamma \in L^p([0,t])$ is an isomorphism and its inverse $T^{-1}:L^p([0,t])\to\Ba_p$  is given by:
\begin{equation}\label{T-1}
T^{-1}(v)(s)=-\int_s^t v(u)du\qquad v\in L^p([0,t]) .
\end{equation}
Analogously the dual space $\Ba_p^*$ is isomorphic to $  L^q([0,t])=(L^{p}([0,t]))^*$, with $\frac{1}{p}+\frac{1}{q}=1$, and the pairing $\langle \eta, \gamma\rangle$ between $\eta\in\Ba_p^*$ and $\gamma\in \Ba_p$ can be written in the following form:
$$\langle \eta, \gamma\rangle=\int_0^t\dot \eta(s)\dot\gamma(s)ds\qquad \dot\eta\in L_{q}([0,t]), \gamma\in\Ba_p.$$
Further $\Ba_p^*$ is isomorphic to $\Ba_q$.\\
Let us consider  the space $\Fo(\Ba_q) $ of functions $f:\Ba_q\to\C$ of the form 
$$f(\eta)=\int_{\Ba_p}e^{i\int_0^t\dot \eta(s)\dot \gamma(s)ds}d\mu_f(\gamma),\, \quad \eta \in \Ba_q, \mu_f\in \Mi(\Ba_p).$$
Let $\Phi_p:\Ba_p\to\C$ be the phase function defined as
$$\Phi_p(\gamma):=(-1)^p\alpha\int_0^t\dot\gamma(s)^pds,$$
where  $\alpha\in\C$  is a complex constant such that 
\begin{itemize}
\item $\Rea(\alpha)\leq 0$ if $p$ is even,
\item $\Rea(\alpha)= 0$ if $p$ is odd.
\end{itemize}
The  infinite dimensional Fresnel integral on $B_q$ with phase function $\Phi_p$ is the functional $I _{\Phi_p} : \Fo(\Ba_q)\to\C$  given by
\begin{equation}\label{funzI-p}
I _{\Phi_p}(f)=\int_{\Ba_p}e^{(-1)^p\alpha \int_0^t\dot \gamma(s)^pds}d\mu_f(\gamma), \quad f\in \Fo(\Ba_q),\,f=\hat\mu_f.
\end{equation}

The following lemma states an interesting connection between the functional \eqref{funzI-p} and the high-order PDE \eqref{PDE-p}.
\begin{lemma}\label{lemmacyl}
Let  $f: \Ba_q\to\C$ be a cylinder function of the 
 following form:
$$f(\eta)=F(\eta(t_1), \eta(t_2), ...,\eta(t_n)),\qquad \eta \in \Ba_q,$$ with $0\leq t_1<t_2<...<t_n< t$ and $F:\R^n\to \C$, $F\in \Fo(\R^n)$: 
$$F(x_1,x_2, ..., x_n)=\int_{\R^n}e^{i\sum_{k=1}^ny_kx_k}d\nu_F(y_1,...,y_n), \qquad \nu_F\in\Mi(\R^n).$$
Then $f\in\Fo(\Ba_p)$ and its infinite dimensional Fresnel integral with phase function $\Phi_p$ is given by
\begin{equation}\label{IntIphi-p}
I _{\Phi_p}(f)=\int^o_{\R^n}F(x_1,x_2, ...,x_n)\Pi_{k=1}^nG^p_{t_{k+1}-t_{k}}(x_{k+1},x_{k})dx_1...dx_n,
\end{equation}
where $x_{n+1}\equiv 0$, $t_{n+1}\equiv t$ ,  $G^p_s$ is the fundamental solution \eqref{Green} of the high order heat-type equation \eqref{PDE-p} and the integral on the right hand side of \eqref{IntIphi-p} is an oscillatory integral in the sense of definition \ref{def-int-osc}. 
\end{lemma}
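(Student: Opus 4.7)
The plan is first to show $f\in\Fo(\Ba_p)$ by constructing the representing measure explicitly as the pushforward of $\nu_F$ along a finite-rank linear map, and then to reduce both sides of \eqref{IntIphi-p} to the same integral over $\R^n$ --- the LHS by a pushforward change of variables, the RHS by iterated Fourier inversion of $g^p_t$. For the first task, each point-evaluation functional $\eta\mapsto\eta(t_k)$ on $\Ba_q$ is represented in the duality $\langle\cdot,\cdot\rangle$ by the element $\gamma_k\in\Ba_p$ defined by $\gamma_k(s):=t-\max(s,t_k)$, whose weak derivative is $\dot\gamma_k=-\chi_{[t_k,t]}\in L^p([0,t])$; indeed, using $\eta(t)=0$,
\[
\langle\eta,\gamma_k\rangle=-\int_{t_k}^t\dot\eta(s)\,ds=\eta(t_k).
\]
Setting $T(y):=\sum_{k=1}^n y_k\gamma_k$ and $\mu_f:=T_\ast\nu_F$, the Fourier representation of $F$ combined with Fubini gives
\[
f(\eta)=\int_{\R^n}e^{i\sum_k y_k\eta(t_k)}\,d\nu_F(y)=\int_{\Ba_p}e^{i\langle\eta,\gamma\rangle}\,d\mu_f(\gamma),
\]
proving $f\in\Fo(\Ba_p)$ with $\|f\|_\Fo\leq\|\nu_F\|$.

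Next, introduce the partial sums $\sigma_j:=y_1+\cdots+y_j$, $\sigma_0:=0$, and the conventions $t_0:=0$, $t_{n+1}:=t$. The derivative $\dot\gamma_y(s)=-\sum_{k:\,t_k\leq s}y_k$ of $\gamma_y:=T(y)$ is piecewise constant with value $-\sigma_j$ on $[t_j,t_{j+1})$, so the pushforward change of variables yields
\[
I_{\Phi_p}(f)=\int_{\R^n}e^{(-1)^p\alpha\int_0^t\dot\gamma_y(s)^p\,ds}\,d\nu_F(y)=\int_{\R^n}e^{\alpha\sum_{j=1}^n(t_{j+1}-t_j)\sigma_j^p}\,d\nu_F(y),
\]
the two factors of $(-1)^p$ (from the phase and from $(-\sigma_j)^p$) cancelling. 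On the other hand, substituting the Fourier representation of $F$ into the right-hand side of \eqref{IntIphi-p} and applying Fubini --- justified since Lemma \ref{lemma1} gives $g^p_t\in L^1(\R)$ and $F$ is bounded by $\|\nu_F\|$ --- iterated integration in $x_1,\ldots,x_n$ produces at the $k$-th step a factor $e^{i\sigma_k x_{k+1}}e^{\alpha(t_{k+1}-t_k)\sigma_k^p}$, via the Fourier-transform identity $\int e^{-i\xi u}g^p_t(u)\,du=e^{\alpha t\xi^p}$ implicit in \eqref{Green2}; using $x_{n+1}=0$ to kill the final oscillating factor then reduces the RHS to the same expression, completing the proof.

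The only real subtlety is the careful bookkeeping of signs and accumulated frequencies in the iterated integration: the normalization of $\Phi_p$ by $(-1)^p$ in \eqref{funzI-p} is precisely what produces the required cancellation of signs between LHS and RHS, and the $L^1$-regularity of $g^p_t$ furnished by Lemma \ref{lemma1} is what makes the repeated application of Fubini on the RHS legitimate despite $e^{\alpha t\xi^p}$ not being integrable in general.
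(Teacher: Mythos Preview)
Your proof is correct and essentially identical to the paper's: both represent the evaluation functional $\eta\mapsto\eta(t_k)$ by the element of $\Ba_p$ with derivative $-\chi_{[t_k,t]}$ (the paper's $v_{t_k}$, your $\gamma_k$), push $\nu_F$ forward to $\Ba_p$, reduce $I_{\Phi_p}(f)$ to $\int_{\R^n} e^{\alpha\sum_j(t_{j+1}-t_j)\sigma_j^{\,p}}\,d\nu_F$ using the piecewise-constant structure of the derivative, and then match the right-hand side via Fubini and the Fourier relation $\int e^{-i\xi u}g^p_t(u)\,du=e^{\alpha t\xi^p}$. The only cosmetic difference is that the paper first changes variables to $\xi_k=x_{k+1}-x_k$ before applying Fubini, whereas you iterate the $x_k$-integrations directly and track the accumulated frequencies $\sigma_k$.
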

\begin{Remark}
In the case $p$ is even and $\Rea(\alpha) <0$ the integral \eqref{IntIphi-p}  is an absolutely convergent Lebesgue integral because of the boundedness of the function $F\in \Fo(\R^n)$ and the summability of the function $g^p_t$ stated in lemma \ref{lemma1-asy}. 
\end{Remark}
\begin{proof}[Proof of lemma \ref{lemmacyl}]
The proof that $f\in\Fo(\Ba_p)$ follows froms the explicit form of the function $f$
$$f(\eta)=F(\eta(t_1), \eta(t_2), ...,\eta(t_n))=\int_{\R^n}e^{i\sum_{k=1}^ny_k\eta(t_k)}d\nu_F(y_1,...,y_n)),\quad \eta\in \Ba_q.$$
and the identity
$$e^{iy\eta(s)}=\int _{\Ba_p}e^{i\langle \eta,\gamma \rangle}\delta_{yv_{s}}(\gamma),$$ where $v_s\in\Ba_p$ is the vector of $\Ba_p$ defined by $$\langle \eta, v_s\rangle=\eta(s), \qquad \forall \eta \in \Ba_q,$$ which can be explicitly written as $$v_s(\tau)=\chi_{[0,s]}(t-s)+\chi_{(s,t]}(t-\tau)s.$$
By the definition of the functional $I_{\Phi_p}$ we have
\begin{eqnarray}
I _{\Phi_p}(f)
&=&\int_{\R^n} e^{(-1)^p\alpha \int_0^t \left(\sum_{k=1}^ny_k\dot v_{t_k}(\tau)\right)^p d\tau}d\nu_F(y_1,...,y_n)\nonumber\\
&=&\int_{\R^n} e^{\alpha \int_0^t \left(\sum_{k=1}^ny_k\chi_{(t_k,t]}(\tau)\right)^p d\tau}d\nu_F(y_1,...,y_n)\nonumber\\
&=&\int_{\R^n} e^{\alpha \int_0^t \left(\sum_{k=1}^n\chi_{(t_k,t_{k+1}]}(\tau)\sum_{j=1}^ky_j\right)^p d\tau}d\nu_F(y_1,...,y_n)\nonumber\\
&=&\int_{\R^n} e^{\alpha\sum_{k=1}^n    ( \sum_{j=1}^k y_j)^p(t_{k+1}-t_{k})}d\nu_F(y_1,...,y_n)\label{eq16}
\end{eqnarray}
On the other hand the last line of Eq. \eqref{eq16} coincides with the oscillatory integral
\begin{equation}\int^o_{\R^n}F(x_1,x_2, ...,x_n)\Pi_{k=1}^nG^p_{t_{k+1}-t_{k}}(x_{k+1},x_{k})dx_1...dx_n.
\end{equation}
Indeed, taken an arbitrary test function $\phi\in S(\R^n)$ such that $\phi(0)=1$, the the function $F_\epsilon:\R^n\to\C$
$$F_\epsilon (x_1,x_2, ...,x_n)\equiv F(x_1,x_2, ...,x_n)\phi(\epsilon x_1,\epsilon x_2, ...,\epsilon x_n))\Pi_{k=1}^nG^p_{t_{k+1}-t_{k}}(x_{k+1},x_{k})$$
is summable because of the boundedness of $F\in\Fo(\R^n)$ and the decaying properties at infinity stated in lemma \ref{lemma1-asy}. Further a change of variable argument and Fubini theorem   yield:
$$\int_{\R^n}F_\epsilon(x)dx 
=\int_{\R^n}  \left(  \int_{\R^n}e^{\alpha\sum_{k=1}^n(t_{k+1}-t_k)(\sum_{j=1}^ky_j+\epsilon \xi_j)^p}\hat \phi (\xi
)d\xi\right) d\nu_F(y),$$
where $\phi(x)=\int_{\R^n}e^{ix \xi }\hat \phi (\xi)d\xi$. By dominated convergence theorem and the condition $\phi(0)=\int_{\R^n}\hat \phi (\xi)d\xi=1$, we eventually obtain
$$\lim_{\epsilon\to 0}\int_{\R^n}F_\epsilon(x)dx =\int_{\R^n}  e^{\alpha\sum_{k=1}^n(t_{k+1}-t_k)(\sum_{j=1}^ky_j)^p}d\nu_F(y),$$

\end{proof}

\begin{corollary}\label{cor1}
Let $u_0\in \Fo(\R)$. Then the cylinder function $f_0:\Ba_q\to\C$ defined by
$$f_0(\eta):=u_0(x+\eta(0)), \qquad x\in \R, \eta\in \Ba_q,$$
belongs to $\Fo(\Ba_q)$ and its  infinite dimensional Fresnel integral with phase function $\Phi_p$ provides a representation for the solution of the Cauchy problem \eqref{PDE-p}, in the sense that the function 
$u(t,x):=I_{\Phi_p}(f_0)$ has the form \begin{equation}\label{sol-free}u(t,x)=\int^o _\R G_t(x,y)u_0(y)dy.\end{equation}
In the case $p$ is even and $\Rea(\alpha)<0$ then the integral \eqref{sol-free} is absolutely convergent, while in the general case it is meant in the oscillatory sense of definition \ref{def-int-osc}.
\end{corollary}

\section{A generalized Feynman-Kac formula}
In the present section,  we consider a Cauchy problem of the form 
\begin{equation} \label{PDE-p-V}\left\{ \begin{aligned}
\frac{\partial}{\partial t}u(t,x)&=(-i)^p \alpha \frac{\partial^p}{\partial x^p}u(t,x)+V(x)u(t,x)\\
u(0,x) &= u_0(x),\qquad x\in\R, t\in [0,+\infty)
\end{aligned}\right. \end{equation}
where $p\in\N$, $p\geq 2$, and $\alpha\in\C$ is a complex constant such that $|e^{\alpha tx^p}|\leq 1$ forall $x\in\R, t\in [0,+\infty)$, while $V:\R\to\C$ is a bounded continuous function. Under these assumption the Cauchy problem \eqref{PDE-p-V} is well posed in $L^2(\R)$. Indeed the operator $\di_p:D(\di_p)\subset L^2(\R)\to L^2(\R)$ defined by 
\begin{eqnarray*}
D(\di_p)&:=& H^p=\{u\in L^2(\R), k\mapsto k^p\hat u(k)\in L^2 (\R) \},\\
\widehat{\di_pu}(k)&:=&k^p\hat u(k), \, u\in D(\di_p),
\end{eqnarray*}
($\hat u$ denoting the Fourier transform of $u$) is self-adjoint. For $\alpha\in\C$, with $|e^{\alpha tx^p}|\leq 1$ forall $x\in\R, t\in [0,+\infty)$, one has that  the operator $A:=\alpha D_p$ generates a strongly continuous semigroup $(e^{tA})_{t\geq 0}$ on $L^2(R)$. By denoting with $B:L^2(\R)\to L^2(\R)$ the bounded multiplication operator defined by 
$$Bu(x)=V(x)u(x), \qquad u\in L^2(\R),$$
one  has that the operator sum $A+B:D(A)\subset L^2(\R)\to L^2(\R)$ generates a   strongly continuous semigroup $(T(t))_{t\geq 0}$ on $L^2(\R)$. Moreover, given a $u\in L^2(\R)$, the vector $T(t)u$ 
can be computed by means of the convergent (in the $L^2(\R)$-norm) Dyson series (see \cite{HiPhi}, Th. 13.4.1):
\begin{equation}\label{Dyson}
T(t)u=\sum_{n=0}^\infty S_n(t)u,
\end{equation}
where $S_0(t)u=e^{tA}u$ and $S_n(t)u=\int_0^t e^{(t-s)A}VS_{n-1}(s)uds$.
By passing to a subsequence, the series above converges also a.e. in $x\in\R$ giving
\begin{multline}\label{Dyson2}
T(t)u (x)=\\
=\sum_{n=0}^\infty \;\; \idotsint\limits _{0\leq s_1 \leq \dots \leq s_n \leq t }\int_{\R^{n+1}}V(x_1)\dots V(x_n) G_{t-s_n}(x,x_n)
G_{s_n-s_{n-1}}(x_n,x_{n-1})\\ \dots G_{s_1}(x_1,x_0)
u_0(x_0)dx_0\dots dx_n\,ds_1\dots ds_n ,\qquad a.e.\; x\in \R.
\end{multline}
Under suitable assumptions on  the initial datum $u_0$ and the potential $V$, we are going to construct a representation of the solution of equation \eqref{PDE-p-V} in $L^2(\R)$ in terms of an infinite dimensional oscillatory integral with polynomial phase.
\begin{teorema}
Let $u_0\in \Fo(\R)\cap L^2(\R)$ and $V\in \Fo(\R)$, with $u_0(x)=\int_\R e^{ixy}d\mu_0(y)$ and $V(x)=\int_\R e^{ixy}d\nu(y)$, $\mu_0,\nu\in\Mi(\R)$.  Then the functional $f_{t,x}:\Ba_q\to\C$ defined by
\begin{equation}\label{f-tx}
f_{t,x}(\eta):=u_0(x+\eta(0))e^{\int_0^tV(x+\eta(s))ds}, \qquad x\in \R, \eta\in \Ba_q,
\end{equation}
belongs to $\Fo(\Ba_q)$ and its  infinite dimensional Fresnel integral with phase function $\Phi_p$ provides a representation for the solution of the Cauchy problem \eqref{PDE-p-V}.
\end{teorema}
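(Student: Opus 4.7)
The plan is to expand the exponential $e^{\int_0^t V(x+\eta(s))ds}$ into its Taylor series, recognize each term as (an integral of) a cylindrical function to which Lemma \ref{lemmacyl} applies, and then identify the resulting series with the Dyson expansion \eqref{Dyson2} of $T(t)u_0(x)$. Writing $V(x+\eta(s))=\int_\R e^{iy(x+\eta(s))}d\nu(y)$ and $u_0(x+\eta(0))=\int_\R e^{iy_0(x+\eta(0))}d\mu_0(y_0)$, the $n$-th term is, after exploiting symmetry to reduce the cube $[0,t]^n$ to the simplex $\Delta_n=\{0\le s_1\le\cdots\le s_n\le t\}$,
\begin{equation*}
f_{t,x}^{(n)}(\eta)=\int_{\Delta_n}\!\!\int_{\R^{n+1}}e^{ix(y_0+\cdots+y_n)}\,e^{i\langle\eta,\,y_0v_0+y_1v_{s_1}+\cdots+y_nv_{s_n}\rangle}\,d\mu_0(y_0)\prod_{k=1}^n d\nu(y_k)\,ds_1\cdots ds_n,
\end{equation*}
where $v_s\in\Ba_p$ is the representing vector of the evaluation functional from the proof of Lemma \ref{lemmacyl}. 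This exhibits $f_{t,x}^{(n)}$ as the Fourier transform of a measure on $\Ba_p$ obtained by pushing forward the product measure through the affine map $(y_0,\dots,y_n,s_1,\dots,s_n)\mapsto y_0v_0+\sum_k y_kv_{s_k}$ and multiplying by the phase $e^{ix\sum y_k}$ of modulus one.

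For the first claim I would sum over $n$ and estimate the total variation of the resulting measure by
\begin{equation*}
\|\mu_{f_{t,x}}\|\le \|\mu_0\|\sum_{n=0}^\infty \frac{t^n\|\nu\|^n}{n!}=\|\mu_0\|\,e^{t\|\nu\|}<\infty,
\end{equation*}
which shows $f_{t,x}\in\Fo(\Ba_q)$. For the second claim I would apply Lemma \ref{lemmacyl} to each $f_{t,x}^{(n)}$ (viewed, for fixed $s_1,\dots,s_n$, as a cylindrical function with nodes $0<s_1<\cdots<s_n$ and cylindrical function $F_n(\xi_0,\dots,\xi_n)=u_0(x+\xi_0)\prod_{k=1}^n V(x+\xi_k)$), obtaining a product of Green-function kernels $G^p_{s_{k+1}-s_k}(\xi_{k+1},\xi_k)$. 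The translation invariance $G^p_s(a,b)=g^p_s(a-b)$ allows the change of variables $y_k=x+\xi_k$, after which the termwise Fresnel integral becomes exactly
\begin{equation*}
\int_{\R^{n+1}}u_0(y_0)\prod_{k=1}^n V(y_k)\,G^p_{t-s_n}(x,y_n)\prod_{k=1}^n G^p_{s_k-s_{k-1}}(y_k,y_{k-1})\,dy_0\cdots dy_n,
\end{equation*}
with $s_0=0$. Summing over $n$ and integrating over $\Delta_n$ reproduces the Dyson series \eqref{Dyson2} for $T(t)u_0(x)$.

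The main technical obstacle is the justification of the termwise application of $I_{\Phi_p}$, i.e.\ the interchange of the Fresnel functional with the summation in $n$ and the integration in $s_1,\dots,s_n,y_0,\dots,y_n$. Two observations make this rigorous: first, $I_{\Phi_p}$ is linear and continuous on $\Fo(\Ba_q)$ with $|I_{\Phi_p}(f)|\le\|f\|_\Fo$, so continuity against the convergent series of measures constructed above suffices; second, the auxiliary bound $\int_{\R^n}\prod_{k=1}^n |g^p_{t_{k+1}-t_k}(z_k)|\,dz_k<\infty$ from Lemma \ref{lemma1}, together with $\|V\|_\infty\le\|\nu\|$ and $\|u_0\|_\infty\le\|\mu_0\|$, gives absolute convergence of the rearranged multiple integral, legitimating Fubini. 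Finally, matching the resulting expression with \eqref{Dyson2} shows that $I_{\Phi_p}(f_{t,x})=T(t)u_0(x)$ for a.e.\ $x$, and hence, since both sides are already known to depend continuously on $x$ (the right-hand side via the Dyson series in $L^2$, the left-hand side as a Fourier transform of a bounded measure), the equality holds pointwise.
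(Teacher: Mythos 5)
Your proposal is correct and follows essentially the same route as the paper: the $n$-th Taylor term of the exponential is realized as the Fourier transform of the $n$-fold convolution power of the measure $\mu_V$ (your pushforward description is exactly that convolution), the bound $\|\mu_0\|e^{t\|\nu\|}$ gives membership in $\Fo(\Ba_q)$, and Lemma \ref{lemmacyl} applied termwise, after symmetrization to the simplex, reproduces the Dyson series \eqref{Dyson2}. Your explicit justification of the interchange of $I_{\Phi_p}$ with the sum and the $s$-integrations is in fact slightly more careful than the paper's own writeup.
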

\begin{Remark}
By Plancherel's theorem the assumption that $u_0\in \Fo(\R)\cap L^2(\R)$ is equivalent to the fact that $u_0$ is the Fourier transform of a function $\hat u_0\in L^1(\R)\cap L^2(\R)$.
\end{Remark}
\begin{proof}

Let $\mu_V\in \Mi(\Ba_p)$ be the measure  defined by 
$$\int_{\Ba_p}f(\gamma)d\mu_V(\gamma)=\int_0^t\int_\R e^{ixy}f(y\,v_s)d\nu(y)ds, \qquad f\in C_b(\Ba_p),$$
where $v_s\in \Ba_p$ is the function  $v_s(\tau)=\chi_{[0,s]}(\tau)(t-s)+\chi_{(s,t]}(t-\tau)s$. One can easily verify   that $\|\mu_V\|_{\Mi(\Ba_p)}\leq t\|\nu\|_{\Mi(\R)}$ and the map $\eta \in \Ba_q\mapsto \int_0^tV(x+\eta(s))ds$ is the Fourier transform of $\mu_V$. Analogously the map $\eta \in \Ba_q\mapsto \exp(\int_0^tV(x+\eta(s))ds) $ is the Fourier transform of the measure $\nu_V\in \Mi(\Ba_p)$ given by $\nu_V=\sum _{n=0}^\infty\frac{1}{n!}\mu_V^{*n}$, where  $\mu_V^{*n}$ denotes the $n$-fold convolution of $\mu_V$ with itself.    
The series is convergent in the $\Mi(\Ba_p)$-norm and one has $\|\nu_V\|_{\Mi(\Ba_p)}\leq e^{t\|\nu\|_{\Mi(\R)}}$. Further,  by lemma \ref{lemmacyl} the cylinder function $\eta\mapsto u_0(x+\eta(0))$, $\eta \in \Ba_q$, is an element of $\Fo(\Ba_q)$. More precisely, it is the Fourier transform of the measure $\nu_{u_0}$ defined by
$$\int_{\Ba_p}f(\gamma)d\nu_{u_0}(\gamma)=\int _\R e^{ixy}f(y\,v_0)d\mu_0(y), \qquad f\in  C_b(\Ba_p).$$
We can then conclude that the map 
 $f_{t,x}:\Ba_q\to\C$ defined by \eqref{f-tx} belongs to $\Fo(\Ba_q)$ and its infinite dimensional Fresnel integral $  I_{\Phi_p}(f_{t,x})$ with phase function $\Phi_p$ is given by
\begin{multline}\nonumber
\sum_{n=0}^\infty \frac{1}{n!}\int_{\Ba_p}e^{(-1)^p\alpha \int_0^y \dot \gamma(s)^pds}d\nu_{u_0}*\mu_V*\cdots *\mu_V=\\
 \sum_{n=0}^\infty \frac{1}{n!}\int_0^t...\int_0^t I_{\Phi_p}\big(u_0(x+\eta(0))V(x+\eta(s_1))\dots V(x+\eta(s_n))\big)ds_1 \cdots ds_n
\end{multline}
By the symmetry of the integrand the latter is equal to
$$\sum_{n=0}^\infty\; \idotsint\limits _{0\leq s_1 \leq \dots \leq s_n \leq t } I_{\Phi_p}\big(u_0(x+\eta(0))V(x+\eta(s_1))\dots V(x+\eta(s_n))\big)ds_1 \cdots ds_n$$
By lemma \ref{lemmacyl} we eventually obtain
\begin{multline}\nonumber \sum_{n=0}^\infty\; \idotsint\limits _{0\leq s_1 \leq \dots \leq s_n \leq t }\int_{\R^{n+1}} u_0(x+x_0)V(x+x_1)\dots V(x+x_n)G_{s_1}(x_1,x_0)\\G_{s_2-s_1}(x_2,x_1)\dots G_{t-s_n}(0,x_{n})dx_0 dx_1\cdots dx_n ds_1 \cdots ds_n,
\end{multline}
that coincides with the  Dyson series  \eqref{Dyson2}  for the solution of the high-order PDE \eqref{PDE-p-V}, as one can easily verify by means of a change of variables argument.
\end{proof}
\section*{Acknowledgments}
Many interesting discussions with Prof. S. Albeverio, S. Bonaccorsi, G. Da Prato  and L.Tubaro are gratefully acknowledged, as well as the financial support of CIRM-Fondazione Bruno Kessler to the project { \em  Functional integration and applications to quantum dynamical systems}.

\end{document}